\newtheorem{theorem}{Theorem}[section]
\newtheorem{conjecture}[theorem]{Conjecture}
\newtheorem{definition}[theorem]{Definition}
\title{This is the title}
\begin{document}
	\hrule\hrule\hrule\hrule\hrule
	\vspace{0.3cm}	
	\begin{center}
		{\bf{LOCALIZED BOUNDED BELOW APPROXIMATE SCHAUDER FRAMES ARE FINITE UNIONS OF APPROXIMATE RIESZ SEQUENCES}}\\
		\vspace{0.3cm}
		\hrule\hrule\hrule\hrule\hrule
		\vspace{0.3cm}
		\textbf{K. MAHESH KRISHNA}\\
		Post Doctoral Fellow \\
		Statistics and Mathematics Unit\\
		Indian Statistical Institute, Bangalore Centre\\
		Karnataka 560 059, India\\
		Email: kmaheshak@gmail.com\\
		
		Date: \today
	\end{center}

%Localized Bounded Below Approximate Schauder Frames are Finite Unions of Approximate Riesz Sequences

\hrule
\vspace{0.5cm}
%--------------------------------------
\textbf{Abstract}: Based on the truth of Feichtinger conjecture by Marcus, Spielman and Srivastava \textit{[Ann. of Math. (2), 2015]} and from the localized version by Gr\"{o}chenig  \textit{[Adv. Comput. Math., 2003]}, we introduce the notion of localization of approximate Schauder frames (ASFs) and approximate Riesz sequences (ARSs). We show that localized bounded below ASFs are finite unions of ARSs.

\textbf{Keywords}:  Feichtinger conjecture, Frame, Riesz sequence, Localization.

\textbf{Mathematics Subject Classification (2020)}: 42C15, 46A45, 46B45.
\vspace{0.5cm}
\hrule 
%\tableofcontents

\section{Introduction}
In the beginning years of $21^{th}$ century, Prof. Feichtinger  formulated the following conjecture based on his extensive work on Gabor/Weyl-Heisenberg frames (see \cite{CHRISTENSEN2014} for the history and  \cite{STOEVA, DUFFINSCHAEFFER, CHRISTENSEN, FEICHTINGERSTROHMER, FEICHTINGERSTROHMER2, HEIL, FEICHTINGERGROCHENIG, GROCHENIG2007, GROCHENIG2014, BENEDETTOWALNUT, GROCHENIGBOOK, GROCHENIG2015} for general theory).
\begin{conjecture}\cite{CASAZZACHRISTENSENLINDNERVERSHYNIN} \textbf{(Feichtinger Conjecture/Marcus-Spielman-Srivastava Theorem)}\label{FCON}
	\textbf{Let  $\{\tau_n\}_n$ be  a frame for  a Hilbert space  $\mathcal{H}$ such that 
		\begin{align*}
			0<\inf_{n\in \mathbb{N}}\|\tau_n\|.
		\end{align*}
		Then $\{\tau_n\}_n$ can be partitioned into a finite union of Riesz sequences for $\mathcal{H}$.}
\end{conjecture}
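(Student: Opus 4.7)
The plan is to reduce the conjecture, via a Gram--matrix reformulation, to the Anderson paving statement (equivalently, Weaver's $\mathrm{KS}_r$), which is the content of the Marcus--Spielman--Srivastava theorem. There are essentially four steps: normalise to unit vectors, rephrase ``Riesz sequence'' as a spectral condition on a principal submatrix of the Gram operator, apply MSS paving to the off-diagonal of that operator, and handle the passage from finite-dimensional paving to the infinite setting.

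\textbf{Step 1 (Normalisation).} Set $c \coloneqq \inf_n \|\tau_n\| > 0$ and $\sigma_n \coloneqq \tau_n/\|\tau_n\|$. If $B$ is the upper frame bound of $\{\tau_n\}_n$, then $\{\sigma_n\}_n$ is a unit-norm Bessel sequence in $\mathcal{H}$ with bound $B/c^2$. A partition $\mathbb{N} = S_1 \sqcup \cdots \sqcup S_r$ that makes each $\{\sigma_n\}_{n \in S_j}$ a Riesz sequence transports back to $\{\tau_n\}_n$ as a partition into Riesz sequences, since multiplication by nonzero scalars $\|\tau_n\|$ preserves the Riesz property (and only rescales the bounds between $c^2 A$ and $\|\tau_n\|^2 B'$).

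\textbf{Step 2 (Gram reformulation).} Let $G$ denote the Gram operator on $\ell^2(\mathbb{N})$ with entries $G_{mn} = \langle \sigma_m, \sigma_n\rangle$. Unit norm gives $G_{nn} = 1$, and the Bessel bound gives $\|G\| \le B/c^2$. For any subset $S \subseteq \mathbb{N}$, the family $\{\sigma_n\}_{n \in S}$ is a Riesz sequence in $\mathcal{H}$ if and only if the principal submatrix $G_S = P_S G P_S|_{\ell^2(S)}$ satisfies $\alpha I \le G_S \le \beta I$ for some $0 < \alpha \le \beta < \infty$; since $G_S - I_S$ has zero diagonal, this reduces to $\|G_S - I_S\| < 1$.

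\textbf{Step 3 (Invocation of MSS / Anderson paving).} By the Marcus--Spielman--Srivastava theorem (proof of the Kadison--Singer problem, equivalently Anderson's paving conjecture), for every $\varepsilon > 0$ there exists $r = r(\varepsilon) \in \mathbb{N}$ such that for every self-adjoint operator $T$ on $\ell^2(\mathbb{N})$ with zero diagonal one can partition $\mathbb{N}$ into $r$ subsets $S_1, \ldots, S_r$ with $\|P_{S_j} T P_{S_j}\| \le \varepsilon\|T\|$ for each $j$. Apply this to $T = G - I$, choosing $\varepsilon$ small enough that $\varepsilon \|G - I\| < 1$; then each $G_{S_j}$ satisfies $\|G_{S_j} - I_{S_j}\| < 1$, so each $\{\sigma_n\}_{n \in S_j}$ is a Riesz sequence by Step~2.

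\textbf{Main obstacle.} Step 3 is the whole difficulty: it is the Marcus--Spielman--Srivastava theorem, whose proof relies on the method of interlacing families and the mixed characteristic polynomial. Granting that ingredient, the remaining task is the infinite-dimensional extension of paving. This I would handle by applying the finite paving result (with uniform $r$ and $\varepsilon$) to each truncation $P_{[1,N]}(G-I)P_{[1,N]}$, regarding the resulting $r$-colourings of $\{1,\ldots,N\}$ as elements of $\{1,\ldots,r\}^{\mathbb{N}}$, and extracting a pointwise limit by compactness (diagonal/K\"onig argument) to obtain a global partition with the same spectral bound.
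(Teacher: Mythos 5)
The paper does not actually prove this statement: it is quoted verbatim as the Feichtinger conjecture, with a citation, and the text simply records that it was resolved by Marcus, Spielman and Srivastava via Weaver's conjecture. So there is no in-paper argument to compare yours against; the paper's own contribution (Theorem \ref{THM}) is a Banach-space analogue for localized ASFs, proved by the entirely different, elementary Gr\"{o}chenig-style separation argument. Judged on its own, your reduction is the standard and correct one: normalisation to a unit-norm Bessel sequence, the Gram-matrix criterion, Anderson paving applied to $G-I$, and a compactness argument for the infinite-dimensional step. Two small points of care. First, in Step 2 the condition $\|G_S - I_S\|<1$ is only \emph{sufficient} for $\{\sigma_n\}_{n\in S}$ to be a Riesz sequence (it gives $G_S \geq (1-\|G_S-I_S\|)I$); it is not equivalent to the Riesz property, but sufficiency is all you use, so this is harmless --- just avoid the phrase ``if and only if'' followed by ``reduces to.'' Second, Step 3 silently bundles together two nontrivial known reductions: MSS actually proved Weaver's $\mathrm{KS}_2$, and the passage from there to $(r,\varepsilon)$-paving of zero-diagonal self-adjoint operators (and thence to the infinite-dimensional statement) is due to Casazza, Edidin, Kalra, Paulsen, Weaver and others; you should cite these equivalences rather than attribute the paving form directly to MSS. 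With those attributions in place, your finite-to-infinite argument is fine: for the limit partition, every finite section $P_F T P_F$ with $F\subseteq S_j$ embeds as a principal submatrix of a paved block of some truncation, so $\|P_{S_j}TP_{S_j}\| = \sup_F \|P_F T P_F\| \leq \varepsilon\|T\|$.
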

First breakthrough which supported  Conjecture \ref{FCON} occurred when Gr\"{o}chenig  proved it for intrinsically  localized frames \cite{GROCHENIG}. Shortly afterwords, it has been verified for certain classes of $\ell^1$-self-localized frames, wavelet frames, Gabor frames, frames of translates, frames formed by reproducing kernels  and exponential frames/frames of exponentials \cite{BOWNIKSPEEGLE, BALANCASAZZAHEILLANDAU, SPEEGLE2008, LATAPAULSEN, BARANOVDYAKONOV, LAWTON, WEBER}.  Conjecture \ref{FCON} received great attention after establishing its   equivalence  with Kadison-Singer conjecture \cite{CASAZZAFICKUSTREMAINWEBER, CASAZZAEDIDIN, CASAZZAKUTYNIOKSPEEGLETREMAIN}. Finally, the Feichtinger conjecture has been solved fully by  resolving Weaver's conjecture by Marcus, Spielman, and Srivastava in 2013 \cite{MARCUSSPIELMANSRIVASTAVA, WEAVER, BOWNIK, TIMOTIN, MARCUSSRIVASTAVA}. In this paper, we formulate a Banach space version of Conjecture \ref{FCON} and prove it for bounded below intrinsically localized ASFs (Theorem \ref{THM}).

\section{Localized bounded below ASFs are finite unions of  ARSs}
We consider the following most general notion of  approximate Schauder frames. In the entire paper,  $\mathcal{X}$ is a  separable Banach space and $\mathcal{X}^*$ is its dual. 

\begin{definition}\cite{CASAZZAHANLARSON, CASAZZADILWORTHODELL, FREEMANODELLSCHIUMPRECHT}
	Let $ \{\tau_n \}_{n}$ be a collection in $\mathcal{X}$ and $ \{f_n \}_{n}$ be a collection in $\mathcal{X}^*$.	The pair  $ (\{f_n \}_{n}, \{\tau_n \}_{n}) $  	is said to be an \textbf{approximate Schauder frame} (we write ASF)  for $\mathcal{X}$ if 
the \textbf{frame operator} 
\begin{align*}
S_{f, \tau}:\mathcal{X}\ni x \mapsto S_{f, \tau}x\coloneqq \sum_{n=1}^\infty f_n(x)\tau_n \in\mathcal{X}	
\end{align*}
 is a well-defined bounded linear invertible operator. 
		
\end{definition}
We use the following notion of `bounded below' for ASFs.
\begin{definition}
	An ASF $ (\{f_n \}_{n}, \{\tau_n \}_{n}) $  for $\mathcal{X}$ is said to be \textbf{bounded below} if 
	\begin{align*}
		\inf_{n \in \mathbb{N}}|f_n(\tau_n)|>0.
	\end{align*}
\end{definition}
Motivated from the definition of localization of frames \cite{GROCHENIG2004, FRNASIERGROCHENIG}, we introduce the following notion.
\begin{definition}
An ASF $ (\{f_n \}_{n}, \{\tau_n \}_{n}) $  for $\mathcal{X}$ is said to be \textbf{intrinsically/self localized} if 	there exist $s>1$ and $A>0$ such that 
\begin{align*}
|f_n(\tau_m)|\leq \frac{A}{(1+|n-m|)^s}, \quad \forall n, m \in \mathbb{N}.
\end{align*}
\end{definition}
Two notions of Riesz sequences for Banach spaces exist in literature, see 
\cite{CHRISTENSENSTOEVA, ALDROUBISUNWAI} and \cite{KRISHNAJOHNSON2}. Here we define another.
\begin{definition}\label{SRB}
An ASF $ (\{f_n \}_{n}, \{\tau_n \}_{n}) $  for $\mathcal{X}$ is  said to be an \textbf{approximate Riesz sequence} (we write ARS) if there exists a finite partition $Q_1, \dots, Q_N$ of $\mathbb{N}$ such that 
\begin{align*}
	\mathbb{N}=\bigcup_{j=1}^NQ_j
\end{align*}
and for each $1\leq j \leq N$, 
\begin{align*}
	\inf_{n \in Q_j}\left(|f_n(\tau_n)|-\sum_{m \in Q_j, m \neq n}|f_n(\tau_m)|\right)>0.
\end{align*}
\end{definition}
Note that for Hilbert spaces, if $\{f_n \}_{n}$ is determined by $\{\tau_n \}_{n}$ (Riesz representation), then Definition \ref{SRB} is equivalent (due to positivity) to the definition of Riesz sequence (see\cite{GROCHENIG}). We now formulate the following conjecture (some other are formulated in \cite{KRISHNA}).

\begin{conjecture}\label{CONJECTURE}
	\textbf{Every bounded below ASF can be partitioned  as a finite union of ARBs.}
\end{conjecture}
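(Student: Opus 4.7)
The plan is to reformulate Conjecture \ref{CONJECTURE} as a paving problem on the cross-Gramian $M = (f_n(\tau_m))_{n,m \in \mathbb{N}}$ and then to transfer that problem to a Hilbert space, where the Marcus-Spielman-Srivastava resolution of Conjecture \ref{FCON} is available. By Definition \ref{SRB}, the ARS property is exactly blockwise diagonal dominance of $|M|$. With $c \coloneqq \inf_n |f_n(\tau_n)| > 0$ from the bounded-below hypothesis, the whole task is to produce a finite partition $Q_1, \dots, Q_N$ of $\mathbb{N}$ such that $\sup_{n \in Q_j}\sum_{m \in Q_j \setminus \{n\}} |f_n(\tau_m)| < c$ for every $j$.

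First I would use the ASF axiom to extract summability and boundedness properties of the off-diagonal part $T = M - \mathrm{diag}(M)$. Since $S_{f,\tau}x = \sum_n f_n(x)\tau_n$ converges unconditionally on all of $\mathcal{X}$, a uniform boundedness argument applied to the partial-sum operators and to tests against $x = \tau_m$ (composed with dual-space functionals separating $\tau_m$) places the rows and columns of $M$ in an appropriate sequence space and bounds $T$ as an operator on that space. At this point the task has become concrete: pave a bounded matrix with bounded-away-from-zero diagonal and controlled off-diagonal mass by a finite number of block subsets so that the $\ell^1$-row sums in each diagonal block drop below $c$.

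Next I would construct an auxiliary Hilbert-space frame whose Gram matrix controls $|M|$ entrywise. A natural candidate is $\tilde{\tau}_n \coloneqq \sum_m \sqrt{|f_n(\tau_m)|}\,\mathrm{sgn}(f_n(\tau_m))\, e_m \in \ell^2(\mathbb{N})$, normalized so that $\inf_n \|\tilde{\tau}_n\| > 0$; checking the Hilbert frame bounds from the bounded invertibility of $S_{f,\tau}$, Conjecture \ref{FCON} applied to $\{\tilde{\tau}_n\}_n$ yields a finite partition into Riesz sequences, and the Riesz condition translates, via Gr\"ochenig's positivity reformulation cited after Definition \ref{SRB}, into a bound on $\sum_{m \in Q_j\setminus\{n\}} |\langle \tilde{\tau}_n, \tilde{\tau}_m\rangle|$, hence on $\sum_{m \in Q_j \setminus \{n\}} |f_n(\tau_m)|$ after undoing the square roots. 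If the constants line up with $c$, this pulled-back partition witnesses the ARS property for $(\{f_n\}_n, \{\tau_n\}_n)$.

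The main obstacle is precisely that $M$ is neither symmetric nor positive semidefinite for a general Banach ASF, so the Hilbert-space lift above is forced to work with a symmetrization of $|M|$ rather than with $M$ itself, and the Riesz/paving constants obtained from Feichtinger's theorem are then expressed in terms of $(|f_n(\tau_m)|+|f_m(\tau_n)|)^{1/2}$-type quantities that need not dominate the asymmetric row sums appearing in Definition \ref{SRB}. The Marcus-Spielman-Srivastava machinery itself is tied to expected characteristic polynomials of rank-one positive semidefinite decompositions, so closing this gap in full generality seems to require either a Banach-space analogue of the interlacing-families method or a structural theorem embedding every bounded-below ASF into a Hilbert-space frame with comparable (not merely symmetrized) cross-Gramian; the intrinsic localization hypothesis used in Theorem \ref{THM} is exactly what makes the error term in the symmetrization summable and controllable, which is why the restricted statement is accessible while the full conjecture remains the central difficulty.
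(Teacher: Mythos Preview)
The statement you are proposing to prove is precisely the conjecture the paper leaves \emph{open}; the paper establishes only the special case of intrinsically localized ASFs (Theorem~\ref{THM}), and it does so by an elementary decay estimate, not by any appeal to Marcus--Spielman--Srivastava. So there is no paper proof to compare against for the full statement, and your own final paragraph already concedes that the general case remains out of reach.

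The programme you sketch has a structural gap independent of the symmetry issue you flag. What Conjecture~\ref{FCON} (via MSS) produces for a Hilbert frame is a partition into Riesz sequences in the \emph{operator-theoretic} sense: the restricted Gram matrices are bounded and boundedly invertible on $\ell^2$. Definition~\ref{SRB}, however, demands blockwise $\ell^1$ diagonal dominance, which is strictly stronger and is \emph{not} delivered by the Riesz property alone; the remark after Definition~\ref{SRB} invokes the Gr\"ochenig reference only for the implication ``$\ell^1$-dominance $\Rightarrow$ Riesz,'' not the converse you would need. Concretely, even in the purely Hilbertian case $f_n=\langle\,\cdot\,,\tau_n\rangle$, a Riesz sequence can have off-diagonal row sums $\sum_{m\neq n}|\langle\tau_n,\tau_m\rangle|$ that diverge, so MSS by itself cannot witness the ARS inequality. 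Secondary issues compound this: the ASF definition does not assert unconditional convergence of $\sum_n f_n(x)\tau_n$, so your ``uniform boundedness argument'' for row/column summability of $M$ is unsupported; and the Gram matrix of your lift $\tilde\tau_n=\sum_m\sqrt{|f_n(\tau_m)|}\,\mathrm{sgn}(f_n(\tau_m))\,e_m$ is $\langle\tilde\tau_n,\tilde\tau_k\rangle=\sum_m\sqrt{|f_n(\tau_m)||f_k(\tau_m)|}\,\mathrm{sgn}(f_n(\tau_m)f_k(\tau_m))$, which neither equals nor pointwise dominates $|f_n(\tau_k)|$, so ``undoing the square roots'' does not recover the quantity in Definition~\ref{SRB}.
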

We now prove Conjecture \ref{CONJECTURE} for intrinsically localized ASFs with the help of following result.
\begin{theorem}\cite{GROCHENIG}\label{GT}
	\begin{enumerate}[\upshape(i)]
		\item For every $s>1$, 
		\begin{align*}
			D_s\coloneqq 	\sup_{x \in \mathbb{R}} \sum_{n=1} ^\infty \frac{1}{(1+|n-x|)^s}<\infty.
		\end{align*}
	\item  For every $s>1$, there exists a $C_s>0$ (which does not depend on  $\delta$) such that 	
	\begin{align*}
		\sup_{m \in \mathbb{N}} \sum_{n \in \mathbb{N}, n \neq m} \frac{1}{(1+|n-m|)^s}\leq \frac{C_s}{\delta^s}, 
	\end{align*}
whenever  
\begin{align*}
	\inf_{n, m \in \mathbb{N}, n \neq m}|n-m|\geq \delta.
\end{align*}
	\end{enumerate}
\end{theorem}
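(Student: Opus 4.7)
The plan is to prove both bounds by direct comparison with the Riemann zeta series $\zeta(s) = \sum_{k\geq 1} k^{-s}$, which converges precisely because $s>1$. No deep analytic input is needed beyond this.

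For part (i), fix $x \in \mathbb{R}$ and write $x = n_0 + d$ with $n_0 \in \mathbb{Z}$ and $d \in [0,1)$. Then as $n$ ranges over $\mathbb{Z}$, the distances $|n-x|$ take the values $d,\,1-d,\,1+d,\,2-d,\,2+d,\ldots$, so each integer scale $k\geq 1$ contributes at most two terms with $|n-x| \geq k-1$, while the closest point contributes at most $1$. Bounding each term from above by the reciprocal of a shifted integer gives
\begin{align*}
\sum_{n=1}^{\infty}\frac{1}{(1+|n-x|)^s} \;\leq\; 1 + 2\sum_{k=1}^{\infty}\frac{1}{k^s} \;=\; 1 + 2\zeta(s).
\end{align*}
Since the right-hand side is independent of $x$, taking a supremum yields $D_s \leq 1 + 2\zeta(s) < \infty$.

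For part (ii), I would use the $\delta$-separation to count how many indices can land near $m$ at each scale. Fix $m$ and split the indices $n\neq m$ into those lying to each side of $m$; ordering them by distance to $m$, the separation hypothesis forces the $k$-th nearest index on a given side to satisfy $|n-m|\geq k\delta$. Summing over both sides and dominating $(1+k\delta)^s$ by $(k\delta)^s$ gives
\begin{align*}
\sum_{\substack{n\neq m}}\frac{1}{(1+|n-m|)^s} \;\leq\; 2\sum_{k=1}^{\infty}\frac{1}{(1+k\delta)^s} \;\leq\; 2\sum_{k=1}^{\infty}\frac{1}{(k\delta)^s} \;=\; \frac{2\zeta(s)}{\delta^s}.
\end{align*}
This yields the claim with $C_s := 2\zeta(s)$, uniformly in $m$ and $\delta$.

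There is essentially no analytic obstacle in either part. The only care required is the combinatorial bookkeeping ensuring that each integer shell contributes at most two terms, which in one dimension is automatic. The same strategy adapts to higher dimensions by replacing $\zeta(s)$ with a convergent integral over $\mathbb{R}^d$ (requiring $s > d$) and using the $\delta$-separation to control the number of points in each dyadic annulus.
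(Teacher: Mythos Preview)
Your argument is correct. Both parts reduce, as you show, to comparison with the zeta series: in (i) the integers near $x$ are enumerated by their signed offset from the nearest integer, and in (ii) the $\delta$-separation forces the $k$-th nearest index on either side of $m$ to lie at distance at least $k\delta$, whence the bound $2\zeta(s)/\delta^s$. One small cosmetic point: in (i) you pass to a sum over all of $\mathbb{Z}$ although the statement only concerns $n\in\mathbb{N}$; this is harmless since it only enlarges the sum, but it is worth saying explicitly. Also note that the statement of (ii) in the paper is somewhat abbreviated---as the application in the proof of Theorem~\ref{THM} makes clear, the sum is really taken over a $\delta$-separated subset of $\mathbb{N}$ (one of the blocks $Q_j$), not over all of $\mathbb{N}$; your reading and proof match this intended meaning.

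As for comparison with the paper: the paper does not supply a proof of Theorem~\ref{GT} at all. The result is quoted from Gr\"{o}chenig \cite{GROCHENIG} and used as a black box in the proof of Theorem~\ref{THM}. Your write-up therefore fills in the omitted justification, and the route you take---direct comparison with $\zeta(s)$ via the one-dimensional separation structure---is exactly the standard one underlying the cited reference.
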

\begin{theorem}\label{THM}
Conjecture \ref{CONJECTURE} holds for intrinsically localized bounded below ASFs.	
\end{theorem}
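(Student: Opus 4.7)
The plan is to exploit the intrinsic localization estimate to make the off-diagonal mass on any sparse subset arbitrarily small, and then partition $\mathbb{N}$ into finitely many arithmetic progressions whose common difference is chosen so that this off-diagonal mass is dominated by the uniform lower bound $\alpha := \inf_n |f_n(\tau_n)| > 0$ coming from the bounded below hypothesis.

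Concretely, I would proceed as follows. Let $s>1$ and $A>0$ be the localization exponent and constant, and let $C_s>0$ be the constant from Theorem \ref{GT}(ii). Since $C_s$ does not depend on $\delta$, I can choose a positive integer $N$ so large that
\begin{equation*}
\frac{A\,C_s}{N^s} < \alpha.
\end{equation*}
Define the partition $Q_1,\dots,Q_N$ of $\mathbb{N}$ by the residues modulo $N$, i.e.\ $Q_j := \{\,j, j+N, j+2N,\dots\,\}$ for $1\le j\le N$. Then for each fixed $j$ we have
\begin{equation*}
\inf_{n,m\in Q_j,\; n\neq m}|n-m| \;\geq\; N.
\end{equation*}

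Now for any $n\in Q_j$, the intrinsic localization estimate together with Theorem \ref{GT}(ii) applied to the set $Q_j$ (which satisfies the spacing hypothesis with $\delta = N$) gives
\begin{equation*}
\sum_{m\in Q_j,\; m\neq n} |f_n(\tau_m)| \;\leq\; A\sum_{m\in Q_j,\; m\neq n} \frac{1}{(1+|n-m|)^s} \;\leq\; \frac{A\,C_s}{N^s}.
\end{equation*}
Combining this with $|f_n(\tau_n)|\geq \alpha$ yields
\begin{equation*}
|f_n(\tau_n)| - \sum_{m\in Q_j,\; m\neq n} |f_n(\tau_m)| \;\geq\; \alpha - \frac{A\,C_s}{N^s} \;>\; 0,
\end{equation*}
and this lower bound is independent of $n\in Q_j$ and of $j$, so taking the infimum over $n\in Q_j$ preserves strict positivity. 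This is exactly the condition in Definition \ref{SRB}, hence the ASF is an ARS with partition $Q_1,\dots,Q_N$.

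There is no real obstacle: the argument is a clean reduction to the two estimates packaged in Theorem \ref{GT}. The only modest point of care is to ensure the chosen $N$ is an integer (so that the residue-class partition makes sense) while still forcing $A C_s/N^s < \alpha$, which is automatic since the right-hand side tends to $0$ as $N\to\infty$. The role of the Feichtinger/Marcus--Spielman--Srivastava theorem is not needed here; localization is strong enough that the partition can be produced explicitly.
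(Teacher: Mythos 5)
Your proposal is correct and follows essentially the same argument as the paper: choose the spacing parameter so that $A C_s/N^s$ falls below the diagonal lower bound, partition $\mathbb{N}$ into well-separated classes, and apply Theorem \ref{GT}(ii) to control the off-diagonal sums. The only cosmetic differences are that you make the partition explicit as residue classes modulo $N$ and use a strict inequality where the paper reserves a margin of $C/2$; both yield the same uniform positive lower bound on the infimum.
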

\begin{proof}
Our proof is highly motivated from \cite{GROCHENIG}. Let $ (\{f_n \}_{n}, \{\tau_n \}_{n}) $  be an intrinsically localized bounded below ASF for  $\mathcal{X}$.  
Define 
\begin{align*}
C\coloneqq \inf_{n \in \mathbb{N}}|f_n(\tau_n)|>0.
\end{align*}	
Since $ (\{f_n \}_{n}, \{\tau_n \}_{n}) $  intrinsically localized	there exist $s>1$ and $A>0$ such that 
\begin{align*}
	|f_n(\tau_m)|\leq \frac{A}{(1+|n-m|)^s}, \quad \forall n, m \in \mathbb{N}.
\end{align*}
Let $C_s$ be the constant  as in Theorem \ref{GT}. Choose natural number $M$ such that 
\begin{align*}
	\frac{AC_s}{M^s}\leq \frac{C}{2}.
\end{align*}
We now partition $\mathbb{N}$ into $Q_1, \dots, Q_N$ such that 
\begin{align*}
	\mathbb{N}\coloneqq \bigcup_{j=1}^NQ_j
\end{align*}
and for each $ 1\leq j\leq N$,
\begin{align*}
	\inf_{n, m \in Q_j, n \neq m}|n-m|\geq M.	
\end{align*}
(Note that there are infinitely many partitions of $\mathbb{N}$ of this type.) Let $ 1\leq j\leq N$. Then using (ii) in Theorem \ref{GT}
\begin{align*}
	\sup_{n \in Q_j}\sum_{m \in Q_j, m \neq n}|f_n(\tau_m)|\leq A \sup_{n \in Q_j}\sum_{m \in Q_j, m \neq n}\frac{1}{(1+|n-m|)^s}\leq A \frac{C_s}{M^s}\leq \frac{C}{2}.
\end{align*}
Therefore for each fixed $ 1\leq j\leq N$
\begin{align*}
\inf_{n \in Q_j}\left(|f_n(\tau_n)|-\sum_{m \in Q_j, m \neq n}|f_n(\tau_m)|\right)&=\inf_{n \in Q_j}|f_n(\tau_n)|-\sup_{n \in Q_j}\sum_{m \in Q_j, m \neq n}|f_n(\tau_m)|\\
&\geq C-\frac{C}{2}=\frac{C}{2}>0.
\end{align*}
Since $j$ was arbitrary, we get the theorem.
\end{proof}

  \bibliographystyle{plain}
 \bibliography{reference.bib}

\end{document}